\documentclass[letterpaper, 11pt]{article}  

\usepackage{graphicx}
\usepackage{amssymb}
\usepackage{amsmath}
\usepackage{algorithmic,algorithm}
\usepackage{subfig}
\usepackage{dsfont}
\usepackage[margin=1in]{geometry}
\def\qed{\hfill\ensuremath{\square}}

\newtheorem{theorem}{Theorem}[section]

\newtheorem{assumption}{Assumption}[section]
\newtheorem{corollary}{Corollary}[section]
\newtheorem{lemma}{Lemma}[section]
\newtheorem{proposition}{Proposition}[section]
\newtheorem{definition}{Definition}[section]

\newenvironment{proof}{\noindent{\it Proof:}\rm}{\hfill\hfill$\Box$\par\medbreak}
\newcommand{\real}{\mathbb{R}}

\newcommand{\D}{\mathcal{D}}

\newcommand{\kk}{\mathbf{k}}
\newcommand{\y}{\mathbf{y}}
\newcommand{\argmin}{\operatorname{arg min}}
\newcommand{\argmax}{\operatorname{arg max}}

\newcommand{\Prob}{\mathbb{P}}

\newcommand{\abs}[1]{|#1|}

\def\qed{\hfill\ensuremath{\square}}
\newcommand{\e}{\mathrm{e}}

\def\prob{\mathbb{P}}

\def\real{\mathbb{R}}

\def\naturals{\mathbb{N}}
\newcommand{\until}[1]{\{1,\dots, #1\}}

\newcommand{\supscr}[2]{#1^{\textup{#2}}}
\newcommand{\setdef}[2]{\{#1 \; | \; #2\}}

\newcommand{\union}{\operatorname{\cup}}

\newcommand{\subj}{\text{subj. to}}

\newcommand\oprocendsymbol{\hbox{$\square$}}
\newcommand\oprocend{\relax\ifmmode\else\unskip\hfill\fi\oprocendsymbol}

\title{Scenario Approach for Robust Blackbox Optimization\\ in the Bandit Setting}

\author{Shaunak D. Bopardikar\hspace*{1cm}\and Vaibhav Srivastava\thanks{Corresponding author: Shaunak~D.~Bopardikar. Email: \texttt{bshaunak@gmail.com}. Shaunak D. Bopardikar was supported by United Technologies Research Center. Vaibhav Srivastava is with the Department of Electrical and Computer Engineering, Michigan State University.}}

\begin{document}

\maketitle

\begin{abstract}
This paper discusses a scenario approach to robust optimization of a blackbox function in a bandit setting. We assume that the blackbox function can be modeled as a Gaussian Process (GP) for every realization of the uncertain parameter. We adopt a scenario approach in which we draw fixed independent samples of the uncertain parameter. For a given policy, i.e., a sequence of query points and uncertain parameters in the sampled set, we introduce a notion of regret defined with respect to additional draws of the uncertain parameter, termed as scenario regret under re-draw. We present a scenario-based iterative algorithm using the upper confidence bound (UCB) of the fixed independent scenarios to compute a policy for the blackbox optimization. For this algorithm, we characterize a high probability upper bound on the regret under re-draw for any finite number of iterations of the algorithm. We further characterize parameter regimes in which the regret tends to zero asymptotically with the number of iterations with high probability. Finally, we supplement our analysis with numerical results.
\end{abstract}

\section{Introduction}
Blackbox optimization refers to the problem of optimizing a function which is described only by input output values. In a bandit setting, one can only query the function at any given point in the domain and obtain the value of the function corrupted by noise. Therefore, even if one queries the function at every point in the domain, one does not obtain the optimal value definitively. In these problems, one does not have access to the form of the function and the computation of the gradient of the function at any given value can be extremely expensive or not feasible at all. This limitation rules out several popular gradient-based algorithms for optimization. 

In many control problems related to aerospace as well as large sized commercial domain applications, blackbox functions commonly arise in the form of complex simulations being put together in software or as hardware-in-the-loop simulators. There are several sources of uncertainty in terms of choices of some parameters in blackbox functions and the blackbox functions are optimized for the worst case of the uncertainty in a max-min formulation. Standard metrics to analyze the performance of bandit problems is the notion of regret, which is the average difference between the value of the function evaluated at multiple points and the unknown optimal value of the function. This paper applies a scenario-based framework to robust blackbox optimization and performs finite-time and asymptotic analysis of a novel robust notion of regret.

Deterministic approaches to robust blackbox optimization were proposed in \cite{DB-ON-KMT:10} for the case of unconstrained optimization and in \cite{DB-ON-KMT:10b} for the case of constraints.
Among stochastic approaches to blackbox optimization, a popular approach is based on the assumption that the unknown function can be represented as a GP \cite{CER-CW:06}. Typical approaches to black-box optimization have been of an iterative nature and identify an \emph{acquisition or surrogate} function to optimize at every iteration, evaluate the blackbox at the optimum of the acquisition function and finally update the acquisition function using all data up to that iteration. Acquisition functions employed successfully so far are the UCB of a GP \cite{gpucb2010}, expected value and probability of improvement~\cite{BS-KS-ZW-RPA-NDF:16}. GP-based approaches to robust blackbox optimization have also received recent attention.  \cite{JX-PIF-SS-AM-SE:12} proposes an efficient approach by defining a GP jointly over the optimization variable and the uncertain parameter and optimizes the value of information metric at every iteration. 

The bandit approach to optimization has been explored in the control theory literature~\cite{VA-PV-JW:87,RA-MVH-DT:88,RA:95,PR-VS-NEL:14h}. The bandit setting for the blackbox optimization is similar in spirit to the Bayesian approach to the so-called multi-armed bandit problem~\cite{EK-OC-AG:12,SA-NG:12,PR-VS-NEL:14a}. Although these works employ Bayesian algorithms, a frequentist notion of regret is analyzed therein.

In the literature so far, there is a clear understanding of how the regret (difference between the current function value at every iteration and the global optimum)  behaves temporally. For a blackbox function that is defined jointly over the design parameters selected by the user and uncertain parameters that are inaccessible to the user during the experiment (and are selected later by the nature), there are no theoretical guarantees (to the best of our knowledge) on how the iterative solution will perform against the actual realization of the uncertain parameter. To this end, we adopt a scenario approach for robust optimization wherein several realizations of the uncertain parameters are sampled and the function is optimized for the worst scenario~\cite{GCC-FD-RT:11}. 

Scenario approach has been used to address computationally complex robust optimization arising in control design~\cite{vidyasagar1998,TempoBaiDabbene97,calafiore2006}. This approach has found several specific applications in which the goal is to characterize the probability with which the true uncertainty will violate the constraints defined by the sample-based solution. Examples include robust model predictive control \cite{GCC-LF:13} and its applications to energy efficient buildings~\cite{XZ-GS-DS-MM:13}, compression learning problems \cite{margellos2015}, and probabilistic solutions to large matrix games \cite{BopardikarBorriHespanha13}. 

This paper discusses a scenario approach applied to robust optimization of a blackbox function over a discrete domain in a bandit setting. We assume that the blackbox function can be modeled as a GP for every realization of the uncertain parameter. The key contributions of this paper are three-fold. First, we formalize an approach based on scenario optimization, in which we draw fixed independent samples of the uncertain parameter and examine a novel notion of regret, termed as the \emph{scenario regret under re-draw}.  Second,  we present a scenario-based iterative algorithm using the UCB) of multiple scenarios to decide at what point to evaluate the blackbox function. For this algorithm, we characterize a high probability upper bound on the regret for any finite number of iterations of the algorithm. Further, we analyze the consistency of this algorithm in terms of the conditions under which the regret tends to zero asymptotically with the number of iterations with high probability. We supplement our analysis with numerical results on a simple illustrative example.

Since the analysis in our paper leverages regret bounds in GP optimization literature~\cite{gpucb2010}, our results are explicit in terms of upper bounds on regret in the problem under uncertainty as opposed to only bounds on violation probability which has been the focus on most of scenario optimization literature. For ease of exposition, we have restricted our attention in this paper to the case of the domain of the function to be a discrete set. By introducing additional assumptions on the function to be optimized, the results here can be extended to a continuous and compact domain akin to the non-robust problem~\cite{gpucb2010}. 

This paper is organized as follows. Section~\ref{sec:background} provides a brief background related to the framework used in this paper. Section~\ref{sec:problem} presents the problem formulation and formally introduces the notion of regret considered in this paper.  Section~\ref{sec:decoupled} presents the UCB based algorithm and presents technical results on upper bounds on the regret under re-draw.  Section~\ref{sec:numerics} illustrates the results using a numerical example. Section~\ref{sec:conclusion} summarizes the work done with some directions identified for future research. The appendix contains the proofs of all the theoretical results.

\section{Background Results and Notation} \label{sec:background}
We begin with a summary of the Gaussian Process Upper Confidence Bound (GP-UCB) technique, one of the Bayesian approaches to blackbox optimization and then summarize a result from the scenario approach. In contrast to the gradient-based optimization approaches, in a bandit setting only a noisy value of the objective function at the query point can be obtained. Due to presence of noise, the nominal value of function cannot be known with probability one in finite time. This leads to the so-called explore-versus-exploit trade-off in sequential decision-making: to select next query point to improve belief or to select the best point per current belief.

\subsection{GP-UCB Algorithm \cite{gpucb2010}}
Consider the problem of sequentially optimizing an unknown function $F(x)$. In each round $t$, we choose a point $x_t \in X \subset \real^d$ and evaluate the function value at $x_t$. The goal is to maximize total reward, i.e., $\max \sum_{t=1}^TF(x_t)$. A reasonable performance metric is the average regret, i.e., the excess cost due to not knowing $F$'s maximizer apriori. The instantaneous regret $r_t := F(x^*) - F(x_t)$ and the average regret after $T$ rounds is $R_T:= \frac{1}{T}\sum_{t=1}r_t$, where $x^*$ is the maximizer of $F$.

The main idea behind the GP-UCB algorithm is to maintain a surrogate of $F(x)$ using a GP. This implies that given noisy samples $\y_T:=[y_1, \dots, y_T]'$ of $F$ evaluated at $A_T := \{x_1,\dots, x_T\}$, respectively, the posterior over $F$ is a Gaussian distribution mean $\mu_T(x)$ and covariance $k_T(x,x')$ given by
\begin{align} \label{eq:GP}
\mu_T(x) &= \kk_T(x)^T(K_T + \rho^2 I)^{-1} \y_T, \nonumber \\
k_T(x,x') &= k(x,x') -  \kk_T(x)^T(K_T + \rho^2 I)^{-1} \kk_T(x'), \nonumber \\
\sigma_T(x) &= \sqrt{k_T(x,x)},
\end{align}
where $k(\cdot,\cdot)$ is the kernel function, the vector\\ $\kk_T(x) := [k(x_1, x) \ldots k(x_T,x)]^T$, $K_T$ is the positive semi-definite kernel matrix $[k(x,x')]_{x,x' \in A_T}$ and $\rho$ is the standard deviation of the Gaussian measurement noise in the samples $y$. Then the GP-UCB algorithm is summarized in Algorithm~\ref{algo:GPUCB}, where $\beta_t$ is appropriately chosen. 

\begin{algorithm}[h]
\begin{algorithmic}[1]
\STATE \textbf{Input:} GP parameters: $\mu_0(\cdot), \sigma_0(\cdot), k(\cdot, \cdot), \rho$.
\FOR{$t = 1, 2, \dots $}{
\STATE Choose $x_{t} = \underset{{x\in \real^d} }{\argmax} \;  \mu_{t-1}(x) + \sqrt{\beta_t} \sigma_{t-1}(x)$.
\STATE Obtain $y_t = F(x_{t}) + n_t$.
\STATE Update the GP parameters $\mu_t(\cdot), \sigma_t(\cdot)$ using~\eqref{eq:GP}.}
\ENDFOR
\end{algorithmic}
\caption{GP-UCB for Blackbox Optimization} \label{algo:GPUCB}
\end{algorithm}

\begin{theorem}[Regret bound (Theorems 1 \& 4, \cite{gpucb2010})]\label{thm:gpucb}
Suppose that $F$ is sampled from a known GP prior with known noise variance $\rho^2$, and that the set $X$ is finite. Let $T_* \in \until{T}$, $\beta_t := 2\log(\abs{X} \pi^2 t^2/(6\epsilon))$ and $\hat{\lambda}_1, \dots, \hat{\lambda}_{|X|}$ denote the eigenvalues of the kernel $k$ evaluated over the entire set $X \times X$. Then, the regret of the GP-UCB algorithm belongs to $O\Big(\sqrt{\frac{\beta_T\gamma_T}{T}}\Big)$ with high probability, i.e.,
\[
\Prob \Big \{ R_T \leq \sqrt{\frac{8\beta_T\gamma_T}{T \log(1+\rho^{-2})}}  \quad \forall T \geq 1 \Big \} \geq 1-\epsilon,
\]
where $\gamma_T$ satisfies
\[
\gamma_T \in O(\rho^{-2} (T\sum_{j = T_*+1}^{|X|} \hat{\lambda}_j + T_*\log(T \sum_{j=1}^{|X|} \hat{\lambda}_j) ) ). \, \, \oprocend
\]
\end{theorem}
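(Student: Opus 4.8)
The plan is to follow the standard two-stage structure: first reduce the cumulative regret to a single information-theoretic quantity—the maximum information gain $\gamma_T$—and then bound $\gamma_T$ through the spectrum of the kernel. The entire regret argument is conditioned on one high-probability event on which the posterior mean tracks $F$ to within $\sqrt{\beta_t}$ posterior standard deviations at every point and every round, so constructing that event cleanly is the natural first move.

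First I would establish the confidence event. Since $F$ is drawn from the GP prior, conditioned on the data $\y_{t-1}$ the marginal of $F(x)$ is Gaussian with mean $\mu_{t-1}(x)$ and standard deviation $\sigma_{t-1}(x)$. A Gaussian tail bound gives $\Prob\{\abs{F(x)-\mu_{t-1}(x)} > \sqrt{\beta_t}\,\sigma_{t-1}(x)\} \le e^{-\beta_t/2}$. With the prescribed $\beta_t = 2\log(\abs{X}\pi^2 t^2/(6\epsilon))$ this probability equals $6\epsilon/(\abs{X}\pi^2 t^2)$, so a union bound over the finitely many $x \in X$ and over $t \ge 1$, using $\sum_{t\ge 1} t^{-2} = \pi^2/6$, keeps the total failure probability at most $\epsilon$. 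On the complementary event, $\abs{F(x)-\mu_{t-1}(x)} \le \sqrt{\beta_t}\,\sigma_{t-1}(x)$ holds uniformly in $x$ and $t$.

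On this event I would bound the instantaneous regret. The selection rule $x_t = \argmax_x \mu_{t-1}(x)+\sqrt{\beta_t}\,\sigma_{t-1}(x)$ forces the upper confidence bound at $x_t$ to dominate that at the maximizer $x^*$, so $F(x^*) \le \mu_{t-1}(x^*)+\sqrt{\beta_t}\,\sigma_{t-1}(x^*) \le \mu_{t-1}(x_t)+\sqrt{\beta_t}\,\sigma_{t-1}(x_t)$, while the lower confidence bound gives $F(x_t) \ge \mu_{t-1}(x_t)-\sqrt{\beta_t}\,\sigma_{t-1}(x_t)$; subtracting yields $r_t \le 2\sqrt{\beta_t}\,\sigma_{t-1}(x_t)$. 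Summing and applying Cauchy--Schwarz, with $\beta_t$ nondecreasing, gives $\sum_{t=1}^T r_t \le \sqrt{T\cdot 4\beta_T \sum_{t=1}^T \sigma_{t-1}^2(x_t)}$. The crucial reduction is to recognize the sum of variances as a multiple of the information gain: the posterior updates give $I(\y_T;F) = \tfrac12\sum_{t=1}^T \log(1+\rho^{-2}\sigma_{t-1}^2(x_t))$, and since $\sigma_{t-1}^2(x_t)\le k(x_t,x_t)$ is bounded, the elementary inequality $s \le \log(1+\rho^{-2}s)/\log(1+\rho^{-2})$ converts $\sum_t \sigma_{t-1}^2(x_t)$ into $\frac{2}{\log(1+\rho^{-2})}\gamma_T$. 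Combining produces exactly $R_T = \tfrac1T\sum_t r_t \le \sqrt{8\beta_T\gamma_T/(T\log(1+\rho^{-2}))}$.

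The remaining and, I expect, hardest piece is the spectral bound on $\gamma_T = \max_{\abs{A}\le T} I(\y_A;F)$. Here I would write the information gain as $\tfrac12\log\det(I+\rho^{-2}K_A)$, exploit its submodularity to relate the worst-case subset to a greedy selection, and bound the log-determinant through the kernel eigenvalues $\hat\lambda_1,\dots,\hat\lambda_{\abs{X}}$. Splitting the spectrum at an arbitrary index $T_*$—handling the leading $T_*$ eigenvalues through a logarithmic determinant term and controlling the tail $\sum_{j>T_*}\hat\lambda_j$ linearly—should yield the claimed $O(\rho^{-2}(T\sum_{j>T_*}\hat\lambda_j + T_*\log(T\sum_j\hat\lambda_j)))$ form. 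The delicate points are the submodularity/greedy step and the free optimization over $T_*$, which is precisely where the kernel's spectral decay enters; by contrast, the confidence event and the variance-to-information reduction are essentially bookkeeping once the tail bound and the logarithmic inequality are in place.
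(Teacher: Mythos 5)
Your proposal is correct and follows the canonical Srinivas et al.\ argument from which this theorem is imported: the paper states Theorem~\ref{thm:gpucb} as a background result without proof, and the three steps you give (Gaussian tail plus union bound over $x\in X$ and $t\ge 1$ using $\sum_{t\ge 1}t^{-2}=\pi^2/6$, the UCB sandwich yielding $r_t\le 2\sqrt{\beta_t}\,\sigma_{t-1}(x_t)$, and Cauchy--Schwarz combined with the inequality $s^2\le \log(1+\rho^{-2}s^2)/\log(1+\rho^{-2})$ to reach $\gamma_T$) are exactly the steps the authors reuse in their appendix proof of Theorem~\ref{thm:weak-regret}. The only piece you leave as a sketch---the spectral bound on $\gamma_T$ via submodularity, greedy selection, and the split of the spectrum at $T_*$---is also the piece the paper itself only ever cites, so nothing in your outline conflicts with how the result is used downstream.
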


\subsection{Probabilistic Constraint Satisfaction \cite{TempoBaiDabbene97}}
Given a function $G:{X\times V} \to \real$, for any given $\hat{x} \in X$, suppose that a multisample $\{ v_1,\dots, v_N\}$ is drawn from $V^N$ according to probability $\Prob_{V^N}$. Suppose also that 
\[
M := \max_{j \in \{1,\dots, N\}} G(\hat{x}, v_j).
\]
We are interested in quantifying the \emph{violation probability}, i.e., for a new sample $v \in V$,
\[
p_v := \Prob_{v} (v \in V \, : \, G(\hat{x} , v) > M). 
\]
The result below quantifies the number of samples $N$ required so that the violation probability is below a specified threshold.

\begin{theorem}[Violation Probability Bound~\cite{TempoBaiDabbene97}]\label{thm:tempo}
For given $\zeta, \eta \in (0, 1)$, suppose that the number of samples
\[
N \geq \frac{\log\frac{1}{\zeta}}{\log\frac{1}{1-\eta}}.
\]
Then, with probability at least $1 - \zeta$, $p_v \leq \eta$. \oprocend
\end{theorem}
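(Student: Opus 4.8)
The plan is to reduce the high-probability statement about the random quantity $p_v$ to a tail estimate on the empirical maximum $M$. The first observation is that once the multisample $\{v_1,\dots,v_N\}$ is fixed, $M$ is a deterministic number, and the violation probability is exactly $p_v = \Prob_v(G(\hat{x},v) > M)$, i.e.\ the value at $M$ of the non-increasing tail function $T(t) := \Prob_v(G(\hat{x},v) > t)$. Thus $p_v = T(M)$ is random only through its dependence on $M$, and the event $\{p_v > \eta\}$ is entirely controlled by how small $M$ is relative to the tail of $G(\hat{x},\cdot)$.

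Next I would introduce the generalized quantile threshold $q := \inf\{t \in \real : T(t) \le \eta\}$. Because $T$ is non-increasing, $M \ge q$ forces $p_v = T(M) \le T(q) \le \eta$; taking the contrapositive yields the crucial inclusion $\{p_v > \eta\} \subseteq \{M < q\}$. The event $\{M < q\}$ is in turn the event that every one of the $N$ independent samples satisfies $G(\hat{x},v_j) < q$, whose probability factorizes as $\Prob_v(G(\hat{x},v) < q)^N$. Using the defining property of $q$ to obtain $\Prob_v(G(\hat{x},v) \ge q) \ge \eta$, hence $\Prob_v(G(\hat{x},v) < q) \le 1-\eta$, I would conclude $\Prob_{V^N}(p_v > \eta) \le (1-\eta)^N$.

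Finally I would impose $(1-\eta)^N \le \zeta$ and solve for $N$: taking logarithms and noting that $\log(1-\eta) < 0$ and $\log\zeta < 0$ gives exactly $N \ge \log(1/\zeta)/\log(1/(1-\eta))$, which is the hypothesis. Therefore $\Prob_{V^N}(p_v \le \eta) \ge 1-\zeta$, as claimed.

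I expect the only genuine subtlety to be the definition and properties of the quantile $q$ when the law of $G(\hat{x},v)$ has atoms or is otherwise not continuous. In that case there need not be a point where the tail mass equals $\eta$ exactly, and one must verify the two-sided estimate $T(q) \le \eta \le \lim_{t \uparrow q} T(t) = \Prob_v(G(\hat{x},v) \ge q)$, which is what simultaneously delivers both the implication $M \ge q \Rightarrow p_v \le \eta$ and the bound $\Prob_v(G(\hat{x},v) < q) \le 1-\eta$. This follows from the monotonicity and right-continuity of $T$, but it is the step that has to be stated with care; everything else is an independence computation. (Note that a simpler exchangeability argument would only yield a bound on $\E[p_v]$ of order $1/(N+1)$, which is insufficient for the high-probability claim, so the quantile-based tail argument is the right route to the stated sample bound.)
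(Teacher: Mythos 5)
Your argument is correct: the reduction of $\{p_v>\eta\}$ to $\{M<q\}$ via the generalized quantile $q$, the careful two-sided estimate $T(q)\le\eta\le\Prob_v(G(\hat{x},v)\ge q)$ handling possible atoms, and the independence computation $(1-\eta)^N\le\zeta$ together constitute the standard proof of this classical sample-complexity bound. The paper itself states this theorem as an imported result from \cite{TempoBaiDabbene97} and gives no proof of it, so there is nothing to compare against beyond noting that your derivation matches the argument in that reference.
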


A useful corollary of this sample complexity bound for the purpose of this paper is the following.
\begin{corollary}[Violation probability]\label{cor:tempo}
Suppose that the number of samples
\[
N = \left \lceil \frac{1}{\eta}\log\frac{1}{\zeta} \right \rceil .
\]
Then, with probability at least $1 - \zeta$, $p_v \leq \eta$. \oprocend
\end{corollary}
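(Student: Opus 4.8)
The plan is to derive the corollary directly from Theorem~\ref{thm:tempo} by verifying that the prescribed sample size $N = \left\lceil \frac{1}{\eta}\log\frac{1}{\zeta}\right\rceil$ already satisfies the hypothesis of that theorem, namely $N \geq \frac{\log(1/\zeta)}{\log(1/(1-\eta))}$. Once this is established, the claimed conclusion that $p_v \leq \eta$ with probability at least $1-\zeta$ follows immediately by invoking Theorem~\ref{thm:tempo}.

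The key step is the elementary inequality $\log\frac{1}{1-\eta} \geq \eta$, valid for all $\eta \in (0,1)$. I would obtain it from the standard bound $\log(1+x) \leq x$ (equivalently $1+x \leq \e^{x}$, a consequence of the concavity of the logarithm). Applying this with $x = -\eta$ gives $\log(1-\eta) \leq -\eta$, and negating both sides yields $-\log(1-\eta) = \log\frac{1}{1-\eta} \geq \eta$.

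With this inequality in hand, I would chain the estimates
\[
\frac{\log(1/\zeta)}{\log(1/(1-\eta))} \leq \frac{\log(1/\zeta)}{\eta} = \frac{1}{\eta}\log\frac{1}{\zeta} \leq \left\lceil \frac{1}{\eta}\log\frac{1}{\zeta}\right\rceil = N,
\]
where the first inequality uses $\log(1/(1-\eta)) \geq \eta > 0$ together with $\log(1/\zeta) > 0$ (since $\zeta \in (0,1)$), and the second inequality is simply the definition of the ceiling function. Hence $N$ meets the requirement of Theorem~\ref{thm:tempo}, which delivers the result.

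There is essentially no serious obstacle here; the corollary is a convenient, slightly looser but more transparent restatement of the sample-complexity bound. The only points requiring care are ensuring the direction of the inequality is preserved when dividing by $\log(1/(1-\eta))$, which is legitimate because this quantity is strictly positive for $\eta \in (0,1)$, and confirming $\log(1/\zeta) > 0$ so that replacing the denominator by the smaller quantity $\eta$ can only enlarge the fraction.
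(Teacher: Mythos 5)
Your proposal is correct and follows exactly the same route as the paper, which justifies the corollary in one line via the elementary inequality $\log\frac{1}{1-\eta} \geq \eta$ for $\eta \in [0,1)$ and then invokes Theorem~\ref{thm:tempo}. Your write-up simply fills in the short derivation of that inequality and the chain of estimates, which the paper leaves implicit.
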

The proof of this result follows from the simple fact that $\log(1/(1-\eta)) \geq \eta, \forall \eta \in [0,1)$.

\section{Problem set-up}\label{sec:problem}
This goal of this paper is to optimize a scalar function $y = F(x,\delta)$, where $x\in X$ is a decision / optimization variable and $\delta \in \Delta$ is an uncertain parameter, the output $y\in \real$ and the function $F:X\times\Delta \to \real$. The main challenges are:

\begin{enumerate}
\item The function $F$ is a black-box or a simulation and the computation of derivative of $F$ is very expensive, ruling out any gradient-based approaches.
\item There are uncertain parameters $\delta$ which might be stochastic with unknown distribution, but we can draw independent and identically distributed samples from this unknown distribution.
\end{enumerate}

This paper considers the following optimization problem 
\begin{align} \label{prob:robust}
\max_{x\in X}\min_{\delta \in \Delta} F(x,\delta),
\end{align}
where the set $\Delta$ could be a continuum. Problem~\eqref{prob:robust} can be interpreted as  $\max_{x \in X,\; \tau \in \real} \; \tau$  subject to constraints $\min_{\delta \in \Delta} F(x, \delta) \ge \tau$.  
Thus, the total number of constraints in \eqref{prob:robust} are potentially infinite, thereby making the problem computationally intractable, except for some simple cases~\cite{bertsimas2011theory}. 

We make the following assumption on the uncertainty $\delta$.
\begin{assumption}[Probabilistic uncertainty] \label{as:delta}
The uncertainty $\delta$ is a random variable with probability distribution function, $\Prob_{\delta}:\Delta \to \real_{\geq 0}$. \qed
\end{assumption}

We now introduce the following assumption on $F$.
\begin{assumption}[Robust Gaussian Process]\label{as:FGP}
For any fixed value of $\delta$, the function $F(\cdot, \delta)$ is a realization of a spatial GP with the mean function equal to zero (without loss of generality) and a kernel $k^\delta(x, x')$. In the following, we will denote such a realization by $\omega \in \Omega$. \oprocend
\end{assumption}

There are three main categories of uncertainty in the blackbox optimization. The first is the \emph{endogeneous} uncertainty that corresponds to fundamental differences in different environments of interest. For example, in the context of energy efficient buildings, endogenous uncertainty may correspond to variability due to differences in architecture of different buildings. The second is the \emph{exogenous} uncertainty that corresponds variability due to external variables such as number of occupants in the building. Together, these two uncertainties describe the underlying objective function that maps to the realization of the GP in our framework. The third is the uncertainty in accessing the realized value of the objective function and is captured by the measurement noise. In the following, we capture inaccessible exogenous uncertainty via uncertain parameter and drop the term exogenous for brevity.

Since the realization of the GP is determined by both $\omega$ and $\delta$, a natural question to ask is why should we adopt robust optimization instead of  learning the actual realization of the GP using the GP-UCB algorithm? 
Recall that the value of the function at a query point is computed using a noisy blackbox simulator that may have access to endogenous uncertainty but not to exogenous uncertainty. Therefore, we want the simulated values to be robust with respect to the actual realized value.    
Furthermore, when the policy that we design using the blackbox simulator is applied to the physical system in \emph{real time},  we need to ensure that the performance guarantees (measured via regret bounds) for the blackbox simulator also hold for the physical system. Towards this end, we introduce a novel notion of robust regret. 
 
Let $D_N := \setdef{(\omega_i, \delta_i)}{i \in \until{N}}\in (\Omega \times \Delta)^N$ be the set of $N$ independent and identically drawn samples of $(\omega,\delta)$. In scenario approach to robust optimization, the robustness is computed with respect to an additional $(N+1)$-th sample of the uncertain parameter. In the context of sequential optimization, we can draw this additional sample once at the beginning of the sequential process and compute robustness at each step with respect to this additional draw. More generally, we can sample a new $(N+1)$-th sample after every $\alpha(\naturals) \to \naturals$ steps and compute robustness with respect to this additional sample. The function $\alpha(\cdot)$ is termed as the \emph{frequency of re-draw} and we make the following assumption.

\begin{assumption}\label{as:alpha}
The function $\alpha(\cdot)$ satisfies $1\leq \alpha(t) \leq t$, for each $t \in \naturals$.
\end{assumption}
 
The lower bound in Assumption~\ref{as:alpha} arises in the case when we sample a single additional scenario once at the beginning of the sequential process. The upper bound arises in the case when we sample a new $(N+1)$-th scenario at each iteration of the sequential process. This notion of robustness corresponds to probability that the max-min solution obtained using some policy with $N$ scenarios will remain the same when an additional (fresh) scenario is presented after every specific number of iterations. We refer to this notion of robustness as \emph{robustness under re-draw}.  

Assumption~\ref{as:alpha} is similar in spirit to the assumption in the multi-armed bandit problem literature on the number of break-points in an abruptly changing environment~\cite{AG-EM:08,VS-PR-NEL:14a,LW-VS:17m}. Loosely speaking, the frequency of re-draw is a time-scale separation between the sampling rate and the rate of change in the environment. Intuitively, a frequency of re-draw that is a sub-linear function of $T$ allows for infinitely many samples between two redraws in the limit $T \to + \infty$ and in this regime, the underlying functions can be learnt asymptotically.

In the following, we denote the $(N+1)$-th scenario at time $t \in \naturals$ by $d_{N+1}^t =(\omega_{N+1}^t, \delta_{N+1}^t)$. The scenario version of the problem~\eqref{prob:robust} given by
\begin{equation}\label{eq:scenario-robust}
J(D_N) : =\max_{x \in X} \min_{d \in D_N} F(x, d).
\end{equation}
Let $(x^*{[N]}, d^*[N])$ be the unique solution to~\eqref{eq:scenario-robust}, where $d^*[N]:=(\omega_{i^*[N]}, \delta_{i^*[N]})$. We define the following notion of regret corresponding to robustness under re-draw.

\begin{definition}[Scenario regret under re-draw]\label{def:regret}
 Given an algorithm generating a sequence of decisions $(x_t, i_t)$, $t \in \until{T}$ and $i_t \in \until{N}$, the \emph{scenario regret under re-draw} of the algorithm is given by
\begin{equation}\label{eq:regret-policy}
\supscr{R}{re-draw}_T(D_N \union \{d_{N+1}^1, \ldots, d_{N+1}^T\}) := \frac{1}{T} \sum_{t=1}^T ( J(D_N \union d_{N+1}^t ) - F(x_t, d_{i_t})),
\end{equation}
where, given a frequency of re-draw $\alpha$, the sequence of samples $\{d_{N+1}^1, \ldots, d_{N+1}^T\}$ satisfies
\begin{equation}\label{eq:dalpha}
d_{N+1}^{\alpha(t)} = d_{N+1}^{\alpha(t)+1} = \ldots = d_{N+1}^{\alpha(t+1)-1},
\end{equation}
for every $t\in \{1,\dots, T\}$. \oprocend
\end{definition}

Note that in $\supscr{R}{re-draw}_T$, the sequence of decisions is computed using only $N$ scenarios and the regret is computed after incorporating the $(N+1)$-th scenario. Recall that these scenarios are unknown realizations of GP and need to be learned in a bandit setting. There are three sources of randomness in the above formulations of the robustness/regret: (i) the random draw of $N$ scenarios; (ii) the random draw of $(N+1)$-th scenario(s); and (iii) the noise in function evaluation at query point.

We seek to design an algorithm and compute an associated upper bound on the above notion of regret. We will focus on finite time as well as asymptotic analysis of the regret associated with the algorithm. Although we assume that the set $X$ is finite and discrete, the asymptotic case remains of interest due to aforementioned sources of noise. We are interested in consistent algorithms that are defined as follows.
\begin{definition}[Consistent Algorithm]\label{def:consistent}
An algorithm is said to be \emph{consistent} if it generates a sequence of decisions $x_t$ and possibly $\delta_{i_t}$, such that the regret under re-draw asymptotically tends to zero as $T \to +\infty$. \oprocend
\end{definition}

\section{Scenario Regret under Re-draw}\label{sec:decoupled}
In this section, we will first establish robustness properties for the scenario regret under re-draw. Then, we will present an algorithm and establish associated guarantees on the scenario regret under re-draw.

\subsection{Robustness of scenario regret under re-draw}

\begin{proposition}[Probabilistic robustness guarantees]\label{prop:robust-regret}
Given the parameters $\eta, \zeta \in (0,1)$, the scenario regret under re-draw defined in~\eqref{eq:regret-policy} with $N = \lceil \alpha(T)/\eta\log(1/\zeta) \rceil$, satisfies
\[
\prob\big(d_{N+1}^1,\dots,d_{N+1}^{\alpha(T)} \in (\Omega \times \Delta)^{\alpha(T)} \, |  \supscr{R}{re-draw}_T(D_N \union \{d_{N+1}^1, \ldots, d_{N+1}^T\}) = \supscr{R}{re-draw}_T (D_N)\big) 
> 1-\eta,
\]
with probability at least $1- \zeta$.  \oprocend
\end{proposition}

In Proposition~\ref{prop:robust-regret}, the outer probability is with respect to $N$-sample, and the inner probability is defined with respect to a new $(N+1)$-th sample. Proposition~\ref{prop:robust-regret} implies that if we have a large number of samples of uncertain parameter $d = (\omega, \delta)$, then with high probability the average regret of a sequence of choices with respect to sampled set $D_N$ will remain the same with the addition of a sequence of new $N+1$-th samples $d_{N+1}^t$.

\subsection{A UCB based Algorithm}

In view of the robustness guarantees from Proposition~\ref{prop:robust-regret}, we now focus on solving problem~\eqref{eq:scenario-robust}. Since $F(x, d)$ is a black-box function, it is tempting to treat $x \mapsto \min_{d \in D_N} F(x, d)$ as a black-box function and apply Algorithm~\ref{algo:GPUCB} to solve~\eqref{eq:scenario-robust}. However, black-box computation of $F(x, d_i)$ for each $i \in \until{N}$ is a realization of some Gaussian random variable. The minimum of these Gaussian random variables is no longer a Gaussian random variable, and we cannot apply Algorithm~\ref{algo:GPUCB} to $\min_{d \in D_N} F(x, d)$. 

Our  approach is described in Algorithm~\ref{algo:scenarioUCB}, in which we maintain a  GP for each $i$, and only update the GP associated with (seemingly) the worst scenario ($i_t$) at each iteration. At every iteration $t$, $n_t \sim \mathcal{N}(0,\rho^2)$ are independently generated. The key idea behind Algorithm~\ref{algo:scenarioUCB} is that \emph{only one} GP realization is updated at each iteration making the implementation scalable from a computational viewpoint. 

\begin{algorithm}[h]
\begin{algorithmic}[1]
\STATE \textbf{Input:} A positive integer $N$, $\Prob_d(\cdot)$ and the blackbox function $F(\cdot, \cdot)$
\STATE Draw a multi-sample $d_1, \dots, d_N$ using $\Prob_d(\cdot)$.
\STATE Choose initial values of GP parameters $\{(\mu^{1}_0(\cdot), \sigma^{1}_0(\cdot)), \dots, (\mu^{N}_0(\cdot), \sigma^{N}_0(\cdot))\}$ and $k^{\delta_1}(\cdot, \cdot), \dots, k^{\delta_N}(\cdot, \cdot)$.
\STATE $\D_0^i = \emptyset, \forall i \in \{1,\dots, N\}$.
\FOR{$t = 1, 2, \dots $}{
\STATE Set $\displaystyle x_{t} := \underset{x\in X}{\argmax} \min_{i\in\{1,\dots, N\}} \mu^i_{t-1}(x) + \sqrt{\beta_t} \sigma^i_{t-1}(x)$
\STATE Set $\displaystyle i_t \in \underset{{i\in \{1,\dots, N\}}}{\argmin} \mu^i_{t-1}(x_t) + \sqrt{\beta_t} \sigma^i_{t-1}(x_t)$
\STATE Obtain $y_t = F(x_{t},d_{i_t}) + n_t$ 
\STATE $\D_t := \D_{t-1} \cup \{x_t, y_t, i_t\}$ 
\STATE Update GP parameters $(\mu^{i_t}_t, \sigma^{i_t}_t)$ using~\eqref{eq:GP}.
}
\ENDFOR
\end{algorithmic}
\caption{Scenario Upper Confidence Bound} \label{algo:scenarioUCB}
\end{algorithm}

We now present bounds on the regret under re-draw~\eqref{eq:regret-policy} under Algorithm~\ref{algo:scenarioUCB}. 
\begin{theorem}[Regret under re-draw] \label{thm:weak-regret}
For each $i \in \{1,\dots, N\}$, let $\hat{\lambda}_j^i$'s denote eigenvalues of the kernel matrix $k^{\delta_i}$ evaluated over the set $X \times X$. For Algorithm~\ref{algo:scenarioUCB} applied to problem~\eqref{eq:scenario-robust} with $\beta_t := 2\log(\abs{X}\pi^2t^2/(3\epsilon))$ and regret under re-draw~\eqref{eq:regret-policy}, the following statement holds.
\begin{enumerate}
\item For any $T \geq 1$, 
\begin{equation*}
\prob \Big(\supscr{R_T}{re-draw} (D_N) \le  \sqrt{ \frac{8  \beta_T \gamma_T}{T \log(1+ 1/\rho^2)}} \Big)   \ge 1 -\epsilon ,
\end{equation*}
where, $\gamma_T = \sum_{i=1}^N \gamma^i_T$ and \\ $\gamma_T^i \in O ( \rho^{-2}(T \sum_{j=2}^{|X|} \hat{\lambda}_j^i + \log \big(T \sum_{j=1}^{|X|} \hat{\lambda}_j^i \big) ) )$.
\item For all $T > |X|$, the claim in 1) holds with the choice of $\gamma_T^i \in O ( |X| \log \big(T \sum_{j=1}^{|X|} \hat{\lambda}_j^i \big)$.
\end{enumerate} \oprocend
\end{theorem}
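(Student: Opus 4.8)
The plan is to adapt the GP-UCB regret analysis behind Theorem~\ref{thm:gpucb} to the min--max structure of~\eqref{eq:scenario-robust}, the essential new ingredient being a per-scenario bookkeeping of the information gain, since Algorithm~\ref{algo:scenarioUCB} updates only the single GP indexed by $i_t$ at each iteration. I would work throughout on the event that the posterior confidence bounds hold. First I would establish that, for $\beta_t = 2\log(\abs{X}\pi^2 t^2/(3\epsilon))$, the two-sided bounds $\abs{F(x,d_i) - \mu^i_{t-1}(x)} \le \sqrt{\beta_t}\,\sigma^i_{t-1}(x)$ hold simultaneously for all $x \in X$, all $i \in \until{N}$, and all $t \ge 1$ with probability at least $1-\epsilon$. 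This follows from the Gaussian posterior property $F(x,d_i)\mid \D^i_{t-1} \sim \mathcal{N}(\mu^i_{t-1}(x),(\sigma^i_{t-1}(x))^2)$, the standard normal tail bound, and a union bound over $X$, over the scenarios, and over $t$ (with $\sum_t t^{-2}=\pi^2/6$ controlling the time sum), the constant in $\beta_t$ being chosen to absorb these unions.

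Next I would derive the pointwise regret inequality $r_t := J(D_N) - F(x_t, d_{i_t}) \le 2\sqrt{\beta_t}\,\sigma^{i_t}_{t-1}(x_t)$. For the upper side, on the confidence event one has, for every $x$, $\min_i F(x,d_i) \le \min_i (\mu^i_{t-1}(x) + \sqrt{\beta_t}\sigma^i_{t-1}(x))$: indeed $\min_i F(x,d_i) \le F(x,d_j)$ for the scenario $j$ achieving the minimum of the upper confidence value, and that $F(x,d_j)$ is in turn bounded by its own confidence value, which equals the pointwise minimum. Maximizing over $x$ and invoking the definitions of $x_t$ and $i_t$ in Algorithm~\ref{algo:scenarioUCB} gives $J(D_N) \le \mu^{i_t}_{t-1}(x_t) + \sqrt{\beta_t}\sigma^{i_t}_{t-1}(x_t)$. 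For the lower side, the confidence bound at $(x_t,i_t)$ gives $F(x_t,d_{i_t}) \ge \mu^{i_t}_{t-1}(x_t) - \sqrt{\beta_t}\sigma^{i_t}_{t-1}(x_t)$; subtracting yields the claim.

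The main technical step is to aggregate $\sum_{t=1}^T r_t \le 2\sqrt{\beta_T}\sum_{t=1}^T \sigma^{i_t}_{t-1}(x_t)$ (using that $\beta_t$ is nondecreasing) and then to control $\sum_{t=1}^T (\sigma^{i_t}_{t-1}(x_t))^2$. This is exactly where the single-GP update matters: I would partition the iterations by the updated scenario, writing $\sum_{t=1}^T (\sigma^{i_t}_{t-1}(x_t))^2 = \sum_{i=1}^N \sum_{t:\, i_t = i} (\sigma^i_{t-1}(x_t))^2$. Along the subsequence $\{t:\,i_t=i\}$ the posterior variances of GP $i$ evolve exactly as in ordinary GP-UCB for the kernel $k^{\delta_i}$, so the standard variance--information-gain lemma gives $\sum_{t:\,i_t=i}(\sigma^i_{t-1}(x_t))^2 \le \tfrac{2}{\log(1+\rho^{-2})}\gamma_T^i$, with $\gamma_T^i$ the maximum information gain of $k^{\delta_i}$ over $T$ evaluations (monotonicity of the information gain lets me replace the count of $i$-updates by $T$). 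Summing over $i$ yields the bound with $\gamma_T = \sum_{i=1}^N \gamma_T^i$. The chief obstacle I anticipate is making this decomposition rigorous -- in particular, justifying that the per-scenario variance sums behave as in the single-GP case despite the data-dependent, interleaved choice of $i_t$, and that they compose correctly with the uniform confidence event.

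Finally, Cauchy--Schwarz gives $\sum_{t=1}^T \sigma^{i_t}_{t-1}(x_t) \le \sqrt{T\sum_{t=1}^T (\sigma^{i_t}_{t-1}(x_t))^2}$, and combining the pieces produces $\supscr{R_T}{re-draw}(D_N) = \tfrac1T\sum_{t=1}^T r_t \le \sqrt{8\beta_T\gamma_T/(T\log(1+\rho^{-2}))}$, which establishes item~1). The explicit forms of $\gamma_T^i$ then follow from the information-gain estimate of Theorem~\ref{thm:gpucb} applied to each kernel $k^{\delta_i}$: taking the free index $T_*=1$ reproduces the expression in item~1), while taking $T_*=\abs{X}$ (permissible precisely when $T>\abs{X}$, so that $T_*\le T$) makes the $\sum_{j=T_*+1}^{\abs{X}}$ term vanish and yields the $O(\abs{X}\log(T\sum_j \hat\lambda_j^i))$ bound of item~2).
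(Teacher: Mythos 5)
Your proposal follows essentially the same route as the paper: the high-probability confidence event, the instantaneous bound $r_t \le 2\sqrt{\beta_t}\,\sigma^{i_t}_{t-1}(x_t)$ obtained from the min--max structure of steps~6--7, the decomposition $\sum_t (\sigma^{i_t}_{t-1}(x_t))^2 = \sum_i \sum_{t:\,i_t=i}(\sigma^i_{t-1}(x_t))^2$ with a per-scenario information-gain bound and $\gamma_T=\sum_i\gamma^i_T$, the choices $T_*=1$ and $T_*=|X|$ in Theorem~\ref{thm:gpucb}, and Cauchy--Schwarz (applied to $\sum_t\sigma^{i_t}_{t-1}$ rather than to $\sum_t r_t^2$, an immaterial difference). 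The one point to watch is your union bound: requiring the confidence inequalities simultaneously for \emph{all} $i\in\until{N}$ costs an extra factor of $N$, so with the stated $\beta_t = 2\log(|X|\pi^2t^2/(3\epsilon))$ your event has probability only $1-N\epsilon$; either fold $N$ into $\beta_t$ or, as the paper does, invoke the bound only at the $\D_{t-1}$-measurable selected indices (the index $j(x)$ minimizing the UCB at each $x$, and $i_t$ at $x_t$), which keeps the count at $O(|X|)$ events per step and recovers the stated $1-\epsilon$. Your derivation of the upper bound $J(D_N)\le \mu^{i_t}_{t-1}(x_t)+\sqrt{\beta_t}\sigma^{i_t}_{t-1}(x_t)$ via $\min_i F(x,d_i)\le\min_i\mathrm{UCB}^i(x)$ is actually cleaner than the paper's, which passes through $\mathrm{UCB}^{i_t}(x^*[N])$ and tacitly uses an inequality that only holds for the minimizing index.
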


We now summarize Proposition~\ref{prob:robust} and Theorem~\ref{thm:weak-regret}.
\begin{corollary}\label{cor:scenario-regret-ucb}
For the policy obtained from Algorithm~\ref{algo:scenarioUCB} solving problem~\eqref{eq:scenario-robust} with $N = \lceil \alpha(T)/\eta\log(1/\zeta) \rceil$ scenarios, with probability at least $1- \zeta$,
\begin{multline*}
\prob \Big( \Big\{d_{N+1}^1,\dots,d_{N+1}^{\alpha(T)} \in \Omega^{\alpha(T)} \times \Delta^{\alpha(T)} \; \Big| \; \\
\prob\Big(\supscr{R}{re-draw}_T(D_N \union \{d_{N+1}^1, \ldots, d_{N+1}^T\}) \leq \sqrt{ \frac{8  \beta_T \gamma_T}{T\log(1+1/\rho^2)}} 
 \ge 1-\epsilon \Big\}\Big)  \ge 1-\eta. \, \,  \oprocend
\end{multline*} 
\end{corollary}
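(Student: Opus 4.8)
The plan is to chain together the two preceding results: the robustness guarantee of Proposition~\ref{prop:robust-regret} ensures that, with high probability, introducing the fresh $(N+1)$-th scenarios leaves the regret unchanged, while Theorem~\ref{thm:weak-regret} supplies the explicit high-probability bound on the regret computed from the $N$ sampled scenarios alone. The corollary is then obtained by substituting the latter bound into the former identity on the intersection of the two high-probability events, while keeping careful track of the three independent sources of randomness identified in Section~\ref{sec:problem}: the $N$-sample $D_N$, the re-drawn samples $d_{N+1}^t$, and the measurement noise.

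First I would fix the number of scenarios $N = \lceil \alpha(T)/\eta\log(1/\zeta) \rceil$ as in both results, and condition on a realization of $D_N$ lying in the event guaranteed by Proposition~\ref{prop:robust-regret}. By that proposition this set of realizations carries outer probability at least $1-\zeta$, and on it the re-draws satisfy
\[
\supscr{R}{re-draw}_T(D_N \union \{d_{N+1}^1, \ldots, d_{N+1}^T\}) = \supscr{R}{re-draw}_T(D_N)
\]
with (inner) probability exceeding $1-\eta$ over the draw of $d_{N+1}^1,\dots,d_{N+1}^{\alpha(T)}$, where~\eqref{eq:dalpha} guarantees that the re-draw sequence is determined by these $\alpha(T)$ fresh samples.

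Next I would invoke Theorem~\ref{thm:weak-regret} with the same value of $N$ and the same choice of $\beta_t$. Since the theorem bounds precisely the quantity $\supscr{R}{re-draw}_T(D_N)$, it yields
\[
\supscr{R}{re-draw}_T(D_N) \le \sqrt{ \frac{8 \beta_T \gamma_T}{T \log(1+1/\rho^2)}}
\]
with probability at least $1-\epsilon$ over the measurement noise, where $\gamma_T = \sum_{i=1}^N \gamma_T^i$. Because the noise sequence $\{n_t\}$ and the policy $(x_t, i_t)$ it drives in Algorithm~\ref{algo:scenarioUCB} depend only on $D_N$ and not on the re-draws, this bound remains valid after conditioning on the harmless-re-draw event of the previous step. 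Hence on the intersection of the two events I may replace $\supscr{R}{re-draw}_T(D_N)$ by $\supscr{R}{re-draw}_T(D_N \union \{d_{N+1}^1,\dots,d_{N+1}^T\})$ and obtain the stated bound. Assembling the probabilities in the correct order — noise innermost at level $1-\epsilon$, re-draws in the middle at level $1-\eta$, and the $N$-sample outermost at level $1-\zeta$ — then reproduces the nested probability statement of the corollary.

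The main obstacle I anticipate is bookkeeping rather than any new estimate: one must argue that the three randomness sources factor appropriately, so that conditioning on the robustness event of Proposition~\ref{prop:robust-regret} does not degrade the regret bound of Theorem~\ref{thm:weak-regret}, and that the event substitution on which the whole argument rests is measurable and preserved under intersection. Verifying this independence and the compatibility of the two conditioning structures — in particular that the Algorithm~\ref{algo:scenarioUCB} decisions are adapted only to $D_N$ and the noise — is the delicate point; everything else is a direct application of the two results already established.
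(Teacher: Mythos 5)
Your proposal is correct and follows exactly the route the paper takes: the corollary is obtained by nesting the robustness guarantee of Proposition~\ref{prop:robust-regret} (outer $1-\zeta$ over $D_N$, middle $1-\eta$ over the re-draws) around the regret bound of Theorem~\ref{thm:weak-regret} (inner $1-\epsilon$ over the measurement noise), using the fact that the re-draws do not influence the algorithm's trajectory. The paper presents this as an immediate summary of the two results without further argument, so your additional care about the independence of the three randomness sources is consistent with, and slightly more explicit than, the paper's treatment.
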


\medskip

Note the three levels of probability in Corollary~\ref{cor:scenario-regret-ucb}. The outermost probability of $(1-\zeta)$ is with respect to the $N$ samples drawn in the scenario approach, the middle probability is the probability that measures robustness with respect to the additional draws $(d_{N+1}^1, \dots, d_{N+1}^{\alpha(T)})$, and the innermost probability is with respect to the temporal realizations of the $N$ measurement noise sequences. We conclude this section with the following result.

\medskip

\begin{proposition}[Consistency]\label{prop:efficiency}
With high probability, the regret scales as 
\[
O\Big (\sqrt{\frac{\alpha(T)|X|}{\eta T}\log\frac{1}{\zeta}\log\frac{|X|T^2}{\epsilon}\log(|X|T)} \Big ) 
\]
In other words, a sufficient condition for consistency of Algorithm~\ref{algo:scenarioUCB} in the sense of Definition~\ref{def:consistent}, is that $\alpha(T)$ must scale strictly sub-linearly with $T$. 
  \oprocend
\end{proposition}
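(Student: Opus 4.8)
The plan is to derive the stated asymptotic scaling directly from the high-probability bound already assembled in Corollary~\ref{cor:scenario-regret-ucb}, and then read off the condition on $\alpha$ under which that bound vanishes. Corollary~\ref{cor:scenario-regret-ucb} guarantees, in the three-level high-probability sense, that with $N = \lceil \alpha(T)/\eta\log(1/\zeta)\rceil$ scenarios the regret under re-draw is at most $\sqrt{8\beta_T \gamma_T/(T\log(1+1/\rho^2))}$; so the work is essentially to substitute the explicit dependence of $\beta_T$, $\gamma_T$ and $N$ on the problem parameters and simplify.

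First I would restrict attention to the regime $T > |X|$, which is the one relevant to the asymptotic statement, and invoke statement~2) of Theorem~\ref{thm:weak-regret}. This is the crucial choice: statement~1) carries a term proportional to $T\sum_{j\ge 2}\hat{\lambda}_j^i$, which is linear in $T$ and would leave $\gamma_T/T$ bounded away from zero, precluding consistency; by contrast, for $T>|X|$ the finite kernel matrix has already saturated its information gain and statement~2) yields the purely logarithmic bound $\gamma_T^i \in O(|X|\log(T\sum_{j=1}^{|X|}\hat{\lambda}_j^i))$. Since the trace $\sum_{j=1}^{|X|}\hat{\lambda}_j^i$ of each kernel matrix is a fixed constant, this is $O(|X|\log(|X|T))$, and summing over the $N$ scenarios gives $\gamma_T = \sum_{i=1}^N \gamma_T^i \in O(N|X|\log(|X|T))$.

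Next I would plug in $\beta_T = 2\log(|X|\pi^2 T^2/(3\epsilon)) \in O(\log(|X|T^2/\epsilon))$, the scenario count $N \in O((\alpha(T)/\eta)\log(1/\zeta))$, and note that $\log(1+1/\rho^2)$ is a constant independent of $T$. Collecting these factors inside the radical gives
\[
\sqrt{\frac{8\beta_T\gamma_T}{T\log(1+1/\rho^2)}} \in O\Big(\sqrt{\frac{\alpha(T)|X|}{\eta T}\log\frac{1}{\zeta}\log\frac{|X|T^2}{\epsilon}\log(|X|T)}\Big),
\]
which is exactly the claimed expression.

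Finally, for the consistency conclusion I would isolate the dependence on $T$: with $\epsilon,\eta,\zeta$ and $|X|$ held fixed, the quantity inside the square root behaves as $\alpha(T)\cdot\mathrm{polylog}(T)/T$, the polylogarithm arising from the product $\log(|X|T^2/\epsilon)\log(|X|T) = \Theta((\log T)^2)$. Hence the regret tends to zero as $T\to+\infty$ precisely when $\alpha(T)(\log T)^2/T \to 0$, and I expect this to be the only step requiring care, since loose sublinearity $\alpha(T)/T\to 0$ alone does not defeat the $(\log T)^2$ factor. A strictly sublinear $\alpha$ in the polynomial sense ($\alpha(T)\in O(T^{c})$ with $c<1$) makes the factor $T^{1-c}$ dominate the polylogarithmic terms and drives the bound to zero, so strict sublinearity of the re-draw frequency is sufficient for consistency in the sense of Definition~\ref{def:consistent}.
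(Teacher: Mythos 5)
Your proposal is correct and follows essentially the same route as the paper, whose proof is a one-line remark that Corollary~\ref{cor:scenario-regret-ucb} (with the $T>|X|$ form $\gamma_T^i \in O(|X|\log(T\sum_j\hat{\lambda}_j^i))$ from part~2 of Theorem~\ref{thm:weak-regret}) gives a regret of $O(\sqrt{N|X|\log(|X|T^2/\epsilon)\log(|X|T)/T})$, into which one substitutes $N=\lceil \alpha(T)/\eta\log(1/\zeta)\rceil$. Your additional observation that plain $\alpha(T)=o(T)$ does not suffice because of the $(\log T)^2$ factor, so that ``strictly sub-linear'' must be read in the polynomial sense $\alpha(T)\in O(T^{c})$ with $c<1$, is a correct and worthwhile sharpening that the paper leaves implicit.
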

The proof of this claim follows from Corollary~\ref{cor:scenario-regret-ucb} which implies that the regret under re-draw scales as $O(\sqrt{N|X|\log(|X|T^2/\epsilon)\log(|X|T)/T})$ in the general case of any arbitrary $T \geq 1$. Additionally, consistency may not be guaranteed if one draws a new $(N+1)$-th scenario at every iteration to test for robustness, which corresponds to $\alpha(T) = \xi T$, for some $\xi \in (0,1]$. 

\section{Numerical Results}\label{sec:numerics}

In this section, we provide some numerical results for Algorithm \ref{algo:scenarioUCB}. In these numerical experiments, we set $X := \{0,\, 0.01,\, 0.02,\, \dots \, 1\}$ and we choose a squared exponential kernel,
\[
k_\delta (x_1, x_2):= \exp\left (-\frac{(x_1 - x_2)^2}{(0.05+0.01\delta)^2} \right),
\]
where $\delta$ is distributed uniformly randomly in the interval $[0,1]$. The function $F$ is generated as per Assumption~\ref{as:FGP}. With the choice of $N = 20$ scenarios, Figure~\ref{fig:UCB} summarizes the actual regret of Algorithm~\ref{algo:scenarioUCB} with the choice of three different functions $\alpha$: 1) $\alpha(t) = t^{0.1}$, 2) $\alpha(t) = t^{0.4}$, and 3) $\alpha(t) = t$. 

\begin{figure}[!h]
\centering
\includegraphics[width=0.8\columnwidth]{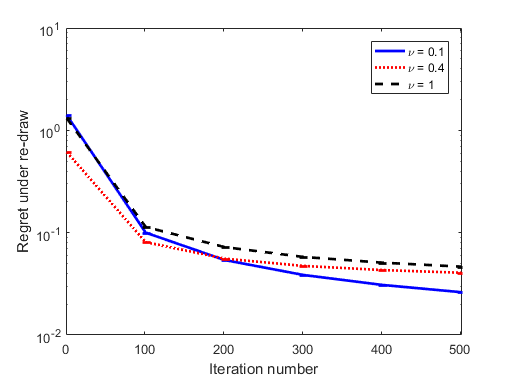}
\caption{Numerical evolution of the regret under re-draw using the UCB approach (Algorithm~\ref{algo:scenarioUCB}) for the choice of three functions $\alpha$: 1) $\alpha(t) = t^{0.1}$, 2) $\alpha(t) = t^{0.4}$, and 3) $\alpha(t) = t$.}
\label{fig:UCB}
\end{figure}

We first observe that after a few iterations the regret $\supscr{R}{re-draw}_T$ is already below $0.5$ and continues to decrease as is expected. Further, after an initial transient of about 200 iterations, the asymptotic regret suggests a monotonic trend with the value of the parameter $\nu$ in $\alpha(t) = t^\nu$. Furthermore, although Proposition~\ref{prop:efficiency} provides only a sufficient condition for $\supscr{R}{re-draw}_T$ tending to zero asymptotically for the choice of $\nu < 1$, we observe that in practice, $\supscr{R}{re-draw}_T$ tends to zero even with the choice of $\nu = 1$, with far fewer number of scenarios $N$ than in the sufficient condition. These results suggest that there is room for improved sample complexity analysis of this method.

\section{Conclusion and Future Directions}\label{sec:conclusion}
We applied the scenario approach to robust optimization of a blackbox function in a bandit setting. This paper assumed that the blackbox function can be modeled as a GP for every realization of the uncertain parameter. We developed a scenario approach in which we draw fixed independent samples of the uncertain parameter and introduced a notion of regret under re-draw. We formalized a variant of a scenario-based iterative algorithm using the UCB of multiple scenarios to decide at what point to evaluate the blackbox function. For this algorithm, we characterized a high probability upper bound on the regret under re-draw for any finite number of iterations of the algorithm and further characterized conditions under which the regret tends to zero asymptotically with the number of iterations with high probability. Finally we supplemented our analysis with numerical results on a simple example.

Future directions include improved and tighter bounds on the high probability guarantees, and improved techniques based on sequential sampling  of the scenarios.

\section*{Acknowledgments}
The authors would like to thank Dr.~Kunal Srivastava from United Technologies Research Center for several insightful conversations on optimization of blackbox functions.



\section*{Appendix: Mathematical Proofs}
We now present the mathematical proofs of the results.

\subsection{Proof of Proposition~\ref{prop:robust-regret}}

Problem~\eqref{eq:scenario-robust} is equivalent to
\begin{equation}\label{eq:scenario-subj-to}
\begin{split}
\underset{{x \in X, \tau \in \real}}{\max} & \quad \tau \\
\subj & \quad F(x, d_i) \ge \tau, \; \text{for all } i \in \until{N}.
\end{split}
\end{equation}
Let the optimal value of $\tau$ be $\tau^*{[N]}$. Then, $\tau^*{[N]} = F(x^*{[N]}, d_{i^*{[N]}})$. Further, let the solution of~\eqref{eq:scenario-subj-to} after addition of additional constraint associated with $d_{N+1}^t$ be $(\tau_t^*{[N+1]}, x_t^*{[N+1]})$, and $i_t^*{[N+1]}$ be an active constraint such that $\tau_t^*{[N+1]}= F(x_t^*{[N+1]}, d_{i_t^*[N+1]})$. Since adding a constraint cannot increase the objective function in~\eqref{eq:scenario-subj-to}, 
\begin{equation}\label{eq:opt-upper}
\tau_t^*{[N+1]} \le \tau^*{[N]}. 
\end{equation}

Since $\min_{d \in D_N} F(x^*{[N]}, d) = \tau^*{[N]}$, with the choice of $N = \lceil \alpha(T)/\eta\log(1/\zeta)\rceil$, it follows from Corollary~\ref{cor:tempo} that, with probability\footnote{This probability is with respect to the multi-sample $d_1,\dots, d_N$ that defines $\tau^*{[N]}$.} at least $1 - \zeta$,  
\[
\prob(d_{N+1}^t \in \Omega \times \Delta \, | \, F(x^*{[N]}, d_{N+1}^t) \ge \tau^*{[N]})
 \ge 1 - \frac{\eta}{\alpha(T)}.
\]
Furthermore, if $F(x^*{[N]}, d_{N+1}^t) \ge \tau^*{[N]}$, then 
\begin{equation}\label{eq:opt-lower}
\tau_t^*{[N+1]}= F(x_t^*{[N+1]}, d_{i^*[N+1]}) \ge \tau^*{[N]}.
\end{equation}
From~\eqref{eq:opt-upper}~and~\eqref{eq:opt-lower}, it follows that 
\begin{equation}\label{eq:probt}
\prob \big(d_{N+1}^t \in \Omega \times \Delta \, | \,   F(x^*{[N]}, d_{i^*[N]}) \\
 = F(x_t^*{[N+1]}, d_{i_t^*[N+1]}) \big) \ge 1 - \frac{\eta}{\alpha(T)},
\end{equation}
with probability at least $1 - \zeta$. Now if $F(x^*{[N]}, d_{i^*[N]})
 = F(x_t^*{[N+1]}, d_{i_t^*[N+1]})$ holds for every $t \in \{1,\dots,  T\}$, then Definition~\ref{def:regret} implies that 
\[
 \supscr{R}{re-draw}_T(D_N \union \{d_{N+1}^1, \ldots, d_{N+1}^T\}) = \supscr{R}{re-draw}_T (D_N).
\]
Further, using \eqref{eq:dalpha}, we conclude that
\begin{align*}
\prob\big(&d_{N+1}^1,\dots,d_{N+1}^{\alpha(T)} \in (\Omega \times \Delta)^{\alpha(T)} \, |  \supscr{R}{re-draw}_T(D_N \union \{d_{N+1}^1, \ldots, d_{N+1}^T\}) \neq \supscr{R}{re-draw}_T (D_N)\big) \\ &= \prob\Big( \bigcup_{k = 1}^{\alpha(T)} F(x^*{[N]}, d_{i^*[N]}) 
 \neq F(x_k^*{[N+1]}, d_{i_k^*[N+1]}) \Big)  \\
&\leq \sum_{k = 1}^{\alpha(T)} \prob(F(x^*{[N]}, d_{i^*[N]}) \neq F(x_k^*{[N+1]}, d_{i_k^*[N+1]})) \\
&\leq \alpha(T)\frac{\eta}{\alpha(T)} = \eta,
\end{align*}
with probability  at least $1 - \zeta$, where the final inequality is due to \eqref{eq:probt}. The claim follows from Definition~\ref{def:regret}.

\subsection{Proof of Theorem~\ref{thm:weak-regret}}

We now analyze Algorithm~\ref{algo:scenarioUCB}. At the $t$-th iteration, define the instantaneous regret $r$ as
\[
r(x_t, d_{i_t}) := \max_{x\in X} \min_{d \in D_N } F(x, d)- F(x_t, d_{i_t}). 
\]
We begin with establishing high probability bounds on  $r(x_t, d_{i_t})$.
\begin{lemma}[Instantaneous regret bound]\label{lem:regretUCB}
Suppose that the set $X$ is discrete and finite. Then, for any $t \geq 1$, 
\[
\Prob(r(x_t, d_{i_t}) \leq 2\sqrt{\beta_t}\sigma^{i_t}_{t-1}(x_t) | \D_{t-1}) \geq 1-2\abs{X}\e^{-\beta_t/2},
\]
for each $i \in \until{N}$, where the underlying random variable corresponds to the posterior distribution of the GP  $F(x, d_{i_t})$  conditioned on $\D_{t-1}$ defined in Algorithm~\ref{algo:scenarioUCB}. 
\end{lemma}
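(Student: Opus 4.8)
The plan is to adapt the single-GP confidence argument behind Theorem~\ref{thm:gpucb} to the min-over-scenarios acquisition function, exploiting the structure of Algorithm~\ref{algo:scenarioUCB}. Throughout, write $g(x) := \min_{i \in \until{N}} F(x, d_i)$ for the true robust objective, $u^i_{t-1}(x) := \mu^i_{t-1}(x) + \sqrt{\beta_t}\,\sigma^i_{t-1}(x)$ for the per-scenario UCB, and $U_{t-1}(x) := \min_{i} u^i_{t-1}(x)$. First I would observe that, conditioned on $\D_{t-1}$, every posterior mean and variance --- and hence $x_t$, $i_t$, and the minimizing-scenario map $k(x) := \argmin_i u^i_{t-1}(x)$ --- is deterministic, so the only remaining randomness is the Gaussian posterior of each $F(\cdot, d_i)$. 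In particular $x_t = \argmax_x U_{t-1}(x)$ and $i_t = k(x_t)$, whence $U_{t-1}(x_t) = u^{i_t}_{t-1}(x_t)$.

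The core step is to define the good event $E_t$ on which, for every $x \in X$, the single GP indexed by the minimizing scenario $k(x)$ lies within its confidence band: $|F(x, d_{k(x)}) - \mu^{k(x)}_{t-1}(x)| \le \sqrt{\beta_t}\,\sigma^{k(x)}_{t-1}(x)$. On $E_t$ the regret bound follows from a short chain of inequalities. For the upper side, evaluating at the (random) maximizer $x^*$ of $g$,
\[
g(x^*) = \min_i F(x^*, d_i) \le F(x^*, d_{k(x^*)}) \le u^{k(x^*)}_{t-1}(x^*) = U_{t-1}(x^*) \le U_{t-1}(x_t),
\]
where the final inequality uses the maximality of $x_t$. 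For the lower side, applying $E_t$ at $x = x_t$ with $k(x_t) = i_t$ gives $\mu^{i_t}_{t-1}(x_t) \le F(x_t, d_{i_t}) + \sqrt{\beta_t}\,\sigma^{i_t}_{t-1}(x_t)$. Combining this with $U_{t-1}(x_t) = \mu^{i_t}_{t-1}(x_t) + \sqrt{\beta_t}\,\sigma^{i_t}_{t-1}(x_t)$ yields $g(x^*) \le F(x_t, d_{i_t}) + 2\sqrt{\beta_t}\,\sigma^{i_t}_{t-1}(x_t)$, which is exactly $r(x_t, d_{i_t}) \le 2\sqrt{\beta_t}\,\sigma^{i_t}_{t-1}(x_t)$.

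It then remains to lower-bound $\Prob(E_t \mid \D_{t-1})$. Since $F(x, d_{k(x)})$ conditioned on $\D_{t-1}$ is $\mathcal{N}(\mu^{k(x)}_{t-1}(x), (\sigma^{k(x)}_{t-1}(x))^2)$, the two-sided Gaussian tail bound gives a failure probability of at most $2\e^{-\beta_t/2}$ at each fixed $x$, and a union bound over the $|X|$ points of $X$ yields $\Prob(E_t^c \mid \D_{t-1}) \le 2|X|\e^{-\beta_t/2}$, as claimed. The step I expect to be the main obstacle --- and the reason the factor is $2|X|$ rather than $2N|X|$ --- is recognizing that one need only control a single GP per point $x$, namely the min-UCB scenario $k(x)$: the inequality $\min_i F(x,d_i) \le F(x, d_{k(x)})$ makes this scenario sufficient for the upper bound, while the algorithm's choice $i_t = k(x_t)$ makes the very same family of $|X|$ events supply the lower bound at $x_t$. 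Care is also needed because $x^*$ depends on the random function values, which is precisely why the confidence band must be enforced simultaneously over all $x \in X$ rather than at a single deterministic point.
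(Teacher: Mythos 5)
Your proof is correct, and at the high level it follows the paper's template: Gaussian concentration for the posterior of one scenario at one point, a union bound over the $|X|$ points of $X$ (hence the factor $2|X|$ rather than $2N|X|$), and a UCB sandwich that exploits the definitions of $x_t$ and $i_t$ in steps 6--7 of Algorithm~\ref{algo:scenarioUCB}. Where you genuinely diverge is in \emph{which} scenario's confidence band is invoked at the comparison point. The paper fixes the single scenario $i_t$, controls $F(\cdot, d_{i_t})$ uniformly over $X$, bounds $\min_{d \in D_N} F(x^*[N],d) \le F(x^*[N], d_{i_t}) \le \mu^{i_t}_{t-1}(x^*[N]) + \sqrt{\beta_t}\sigma^{i_t}_{t-1}(x^*[N])$, and then asserts that this last quantity is at most $\mu^{i_t}_{t-1}(x_t) + \sqrt{\beta_t}\sigma^{i_t}_{t-1}(x_t)$ ``from the definition of $x_t$.'' That step is not immediate: step 6 only yields $\min_i u^i_{t-1}(x^*[N]) \le \min_i u^i_{t-1}(x_t) = u^{i_t}_{t-1}(x_t)$ (in your notation $u^i_{t-1}(x) = \mu^i_{t-1}(x)+\sqrt{\beta_t}\,\sigma^i_{t-1}(x)$), and $u^{i_t}_{t-1}(x^*[N])$ sits on the wrong side of that minimum, since $i_t$ need not be the minimizing scenario at $x^*[N]$. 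Your routing through $k(x)$ and $U_{t-1}(x) = \min_i u^i_{t-1}(x)$ --- namely $\min_i F(x^*, d_i) \le F(x^*, d_{k(x^*)}) \le U_{t-1}(x^*) \le U_{t-1}(x_t) = u^{i_t}_{t-1}(x_t)$ --- is exactly the repair that makes the chain airtight while still requiring only $|X|$ Gaussian events (one per point, evaluated at that point's own min-UCB scenario), so the stated probability $1-2|X|\e^{-\beta_t/2}$ is preserved. In short, your argument is not merely a valid alternative; it is the more careful version of the paper's own proof, and your closing remark about why one GP per point suffices pinpoints precisely the place where the published chain of inequalities is loose.
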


\begin{proof}
Assumption~\ref{as:FGP} implies that the posterior of $F(x, d_i)$ conditioned on $D_{t-1}$ is Gaussian with mean $\mu_{t-1}^{i}(x)$ and standard deviation $\sigma_{t-1}^{i}(x)$. For any $t$ and for any $i$:
\begin{align*}
F(x, d_{i}) | \D_{t-1} \sim \mathcal{N}(\mu_{t-1}^{i}(x), (\sigma_{t-1}^{i}(x))^2). 
\end{align*}
From the concentration of measures inequality for Gaussian random variables, for any $i$, we have
\[
\Prob(\abs{F(x, d_{i}) - \mu_{t-1}^{i}(x)} >  \sqrt{\beta_t}\sigma_{t-1}^{i}(x) | \D_{t-1}) \leq 2\e^{-\beta_t/2}.
\]
Using the union bound, we have for any $i$ and for all $x \in X$,
\begin{equation}\label{eq:beta}
\Prob(\abs{F(x, d_{i}) - \mu_{t-1}^{i}(x)} > \sqrt{\beta_t}\sigma_{t-1}^{i}(x) | \D_{t-1}) \leq 2\abs{X}\e^{-\beta_t/2}.
\end{equation}
Now consider
\begin{align*}
r(x_t, \delta_{i_t}) 
&\leq F(x^*[N],d_{i_t}) - F(x_t, d_{i_t}),
\end{align*}
where $x^*[N]$ is a solution of the sampled problem~\eqref{eq:scenario-subj-to}. Therefore, with probability at least $1 - 2\abs{X}\e^{-\beta_t/2}$, we have
\begin{align*}
r(x_t, d_{i_t})  &\leq \mu_{t-1}^{i_t}(x^*[N]) + \sqrt{\beta_t}\sigma_{t-1}^{i_t}(x^*[N])  -  \mu_{t-1}^{i_t}(x_t) + \sqrt{\beta_t}\sigma_{t-1}^{i_t}(x_t) \\ &\leq  \mu_{t-1}^{i_t}(x_t) + \sqrt{\beta_t}\sigma_{t-1}^{i_t}(x_t)  -  \mu_{t-1}^{i_t}(x_t) + \sqrt{\beta_t}\sigma_{t-1}^{i_t}(x_t) \\ 
&= 2\sqrt{\beta_t}\sigma_{t-1}^{i_t}(x_t),
\end{align*}
where the second inequality follows from the definition of $x_t$ in step 6 of Algorithm~\ref{algo:scenarioUCB}. This completes the proof.
\end{proof}

We now present the proof of Theorem~\ref{thm:weak-regret}.

With $\beta_t= 2 \log(\abs{X}\pi^2t^2/(3\epsilon))$, it follows from Lemma~\ref{lem:regretUCB} that 
\[
\Prob(r(x_t, d_{i_t}) > 2\sqrt{\beta_t}\sigma^{i_t}_{t-1}(x_t)) \le 6 \epsilon/ \pi^2 t^2. 
\]
By applying union bound,
\[
\Prob(r(x_t, d_{i_t}) > 2\sqrt{\beta_t}\sigma^{i_t}_{t-1}(x_t), \; \text{for some } t \in \naturals) \le \epsilon. 
\]
Therefore, following steps similar to those in~\cite{gpucb2010}, with probability at least $1-\epsilon$,  
\begin{align*}
\sum_{t=1}^T  r(x_t, d_{i_t})^2 \le \sum_{t=1}^T 4 \beta_t (\sigma^{i_t}_{t-1}(x_t))^2 
&\le 4 \beta_T \sum_{t=1}^T \sum_{i=1}^N (\sigma^{i}_{t-1}(x_t))^2 \boldsymbol 1(i_t =i) \\
 &\le  \frac{4 \beta_T}{\log(1+1/\rho^2)} \sum_{i=1}^N \sum_{t=1}^T \log \Big(1 +\frac{(\sigma^i_{t-1}(x_t))^2}{\rho^2} \Big),
\end{align*}
where the last inequality follows since $s^2 \le \log(1+ s^2)/(\rho^2 \log(1+1/\rho^2))$ for each $s \in [0, 1/\rho^2]$~\cite{gpucb2010}. 

Let $\gamma^i_T = \frac{1}{2}\sum_{t=1}^T  \log \big(1 +\frac{(\sigma^i_{t-1}(x_t))^2}{\rho^2} \big)$. It follows from Theorem~\ref{thm:gpucb} (with the choice of  $T_* = 1$ therein) that
\[
\gamma_T^i \in O \Big ( \rho^{-2} \big (T \sum_{j=2}^{|X|} \hat{\lambda}_j^i + \log (T \sum_{j=1}^{|X|} \hat{\lambda}_j^i ) \big ) \Big ).
\]
From Cauchy-Schwartz inequality, it follows that
\begin{align*}
\supscr{R_T}{re-draw} (D_N) &\le \frac{1}{T}\sqrt{T \sum_{t=1}^T  r(x_t, d_{i_t})^2} \le \sqrt{\frac{8 \beta_T \gamma_T}{ T \log(1+1/\rho^2)}},
\end{align*}
which establishes the first claim of this theorem. To establish the second claim, observe that in the regime of $T \geq |X|$, with the choice of $T_* = |X|$ in Theorem~\ref{thm:gpucb}, we have
\[
\gamma^i_T  \in O\Big(\frac{|X|}{\rho^2} \log  \Big(T \sum_{j=1}^{|X|} \hat{\lambda}_j^i \Big) \Big),
\]
which leads to the second claim. 

\end{document}